\newcommand{\dd}[1]{\ensuremath{\mathcal{#1}}}
\newcommand{\iso}{\cong}
\newcommand{\ie}{\textit{i.e.}}
\numberwithin{equation}{section}
\newtheorem{theorem}[equation]{Theorem}
\newtheorem{proposition}[equation]{Proposition}
\newtheorem{lemma}[equation]{Lemma}
\newtheorem*{theoremA}{Theorem A}
\theoremstyle{definition}
\newtheorem{definition}[equation]{Definition}
\newtheorem{example}[equation]{Example}
\newcommand{\bZ}{\mathbb{Z}}
\newcommand{\nbd}{\nobreakdash}
\newcommand{\id}{\ensuremath{\mathrm{id}}}
\newcommand{\tensor}{\otimes}
\newcommand{\inv}{^{-1}}
\newcommand{\laurent}[1]{[#1,\, #1\inv]}
\newcommand{\nov}[1]{( \kern -1.7 pt ( {#1} ) \kern -1.7 pt )} %Tighter ((-))
\newcommand{\powers}[1]{[\kern -1.5pt [{#1}] \kern -1.5pt ]} %Tighter [[-]]
\newcommand{\pp}{\mathbb{P}^1}
\newcommand{\qco}{\mathfrak{QCoh}}
\newcommand{\spec}{\mathrm{Spec}\,}
\newcommand{\invlim}{\mathop{\lim}\limits_\leftarrow{}}
\newcommand{\coker}{\mathrm{coker}}
\newcommand{\cone}{\mathrm{cone}}
\newcommand{\Hy}{\mathbf{H}}
\newcommand{\fpqc}{\text{fpqc}}
\newcommand{\Hfpqc}{\Hy_\fpqc}
\newcommand{\Rt}{R}
\begin{document}

\title[Vector bundles on the projective line]{Vector bundles on the
  projective line\\and finite domination of chain complexes}

\date{\today}

\author{Thomas H\"uttemann}

\address{Thomas H\"uttemann\\ Queen's University Belfast\\ School of
  Mathematics and Physics\\ Pure Mathematics Research Centre\\ Belfast
  BT7~1NN\\ UK}

\email{t.huettemann@qub.ac.uk}

\urladdr{http://huettemann.zzl.org/}

\subjclass[2010]{Primary 18G35; Secondary 55U15, 16E20}

\keywords{Perfect complex; finite domination; $K$-theory; projective
  line; extension of vector bundle}

\thanks{This work was supported by the Engineering and Physical
  Sciences Research Council [grant number EP/H018743/1]. The idea to
  write this paper was born during a research visit to Academia Sinica
  in July~2013; their support is gratefully acknowledged.}

\begin{abstract}
  Finitely dominated chain complexes over a \textsc{Laurent\/}
  polynomial ring in one indeterminate are characterised by vanishing
  of their \textsc{Novikov\/} homology. We present an
  algebro-geometric approach to this result, based on extension of
  chain complexes to sheaves on the projective line. We also discuss
  the $K$-theoretical obstruction to extension.
\end{abstract}

\maketitle

\thispagestyle{empty}

Let $R$ be a ring with unit, and let $\Rt \nov x = \Rt
\powers x [1/x]$ and $\Rt \nov{x\inv} = \Rt \powers {x\inv} [1/x\inv]$
be the rings of formal \textsc{Laurent\/} series (finite to the left
and right, respectively).

\begin{theoremA}[\textsc{Ranicki} {\cite[Theorem~2]{MR1341811}}]
  Let $C$ be a bounded complex of finitely generated free $\Rt
  \laurent x$\nbd-modules. The complex $C$ is $R$\nbd-finitely
  dominated (\ie, homotopy equivalent, as an $R$\nbd-module complex,
  to a bounded complex of finitely generated projective
  $R$\nbd-modules) if both
  \[C \tensor_{\Rt \laurent x} \Rt \nov{x} \quad \text{and} \quad C
  \tensor_{\Rt \laurent x} \Rt \nov{x\inv}\] are acyclic (and hence
  contractible) complexes.
\end{theoremA}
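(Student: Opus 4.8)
I will regard $C$ as a bounded complex of vector bundles on the torus $\spec \Rt\laurent x$. The plan is to compactify this torus to $\pp_\Rt = U_0 \cup U_\infty$, where $U_0 = \spec \Rt[x]$ and $U_\infty = \spec \Rt[x\inv]$, to extend $C$ to a perfect complex $\mathcal{C}$ on $\pp_\Rt$, and then to recover $C$ from $\mathcal{C}$ in a way that forces the two vanishing hypotheses to do the work. For the extension: rescaling the free generators of the terms of $C$ by suitable powers of $x$ makes every differential into a matrix over $\Rt[x]$, yielding a bounded complex $C_+$ of finitely generated free $\Rt[x]$\nbd-modules with $C_+[x\inv] = C$; symmetrically one gets a complex $C_-$ over $\Rt[x\inv]$. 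The transition matrices relating $C_+$ and $C_-$ over $\Rt\laurent x$ are then diagonal with monomial entries, so $C_+$ and $C_-$ glue to a bounded complex $\mathcal{C}$ of vector bundles on $\pp_\Rt$ --- each term a finite direct sum of line bundles --- whose restriction to $\spec \Rt\laurent x$ is $C$.

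Let $Z_0 \subset \pp_\Rt$ be the closed subscheme cut out by $x$ and $Z_\infty$ the one cut out by $x\inv$ (each a copy of $\spec R$); their union is the complement of $\spec \Rt\laurent x$ in $\pp_\Rt$. The localisation triangle for $Z_0 \sqcup Z_\infty$ is
\[ R\Gamma_{Z_0}(\pp_\Rt, \mathcal{C}) \oplus R\Gamma_{Z_\infty}(\pp_\Rt, \mathcal{C}) \longrightarrow R\Gamma(\pp_\Rt, \mathcal{C}) \longrightarrow C , \]
its last term being $C$ itself, viewed as a complex of $R$\nbd-modules, since $\spec \Rt\laurent x$ is affine; thus $C$ is the cone of a morphism between $R\Gamma_{Z_0}(\pp_\Rt, \mathcal{C}) \oplus R\Gamma_{Z_\infty}(\pp_\Rt, \mathcal{C})$ and $R\Gamma(\pp_\Rt, \mathcal{C})$. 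The middle term is $R$\nbd-perfect because $\pp_\Rt \to \spec R$ is proper and flat; concretely, $\mathcal{C}$ is assembled from line bundles $\mathcal{O}(a)$, and the cohomology of each $\mathcal{O}(a)$ --- computed by the \v{C}ech complex of the cover $\{U_0, U_\infty\}$ --- is quasi\nbd-isomorphic to a bounded complex of finitely generated free $R$\nbd-modules, hence so is the totalisation of the bounded \v{C}ech double complex computing $R\Gamma(\pp_\Rt, \mathcal{C})$. For the support terms I would combine excision with the standard computation of local cohomology along a principal divisor, which depends only on the $x$\nbd-adic completion: this produces a triangle relating $R\Gamma_{Z_0}(\pp_\Rt, \mathcal{C})$ to $C_+ \tensor_{\Rt[x]} \Rt\powers x$ and $C_+ \tensor_{\Rt[x]} \Rt\nov x$. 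As $C_+ \tensor_{\Rt[x]} \Rt\nov x = C \tensor_{\Rt\laurent x} \Rt\nov x$ is acyclic by hypothesis, this gives $R\Gamma_{Z_0}(\pp_\Rt, \mathcal{C}) \simeq C_+ \tensor_{\Rt[x]} \Rt\powers x$, and symmetrically $R\Gamma_{Z_\infty}(\pp_\Rt, \mathcal{C}) \simeq C_- \tensor_{\Rt[x\inv]} \Rt\powers{x\inv}$. So it suffices to show that both of these are $R$\nbd-perfect.

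Both are covered by the following \emph{key lemma}: a bounded complex $D$ of finitely generated free $\Rt\powers t$\nbd-modules with $D \tensor_{\Rt\powers t} \Rt\nov t$ acyclic is $R$\nbd-finitely dominated. The proof is the classical trick that makes the Novikov conditions bite. The complex $D \tensor_{\Rt\powers t} \Rt\nov t$ is a bounded acyclic complex of finitely generated projective $\Rt\nov t$\nbd-modules, hence contractible, and a contracting homotopy has matrix entries involving only finitely many negative powers of $t$; multiplying it by a sufficiently high power $t^N$ clears all of those poles and produces a chain homotopy on $D$ itself showing that $t^N \colon D \to D$ is null\nbd-homotopic. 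Therefore
\[ D \oplus D[1] \;\simeq\; \cone(t^N \colon D \to D) \;=\; D \tensor_{\Rt\powers t} \bigl[\, \Rt\powers t \xrightarrow{\;t^N\;} \Rt\powers t \,\bigr] \;\simeq\; D \tensor_{\Rt\powers t} \Rt[t]/(t^N) \;=\; D/t^N D , \]
and since $\Rt[t]/(t^N)$ is finitely generated free over $R$, the right-hand side is a bounded complex of finitely generated free $R$\nbd-modules, hence $R$\nbd-perfect; so is its retract $D$. Applying this to $C_+ \tensor_{\Rt[x]} \Rt\powers x$ and to $C_- \tensor_{\Rt[x\inv]} \Rt\powers{x\inv}$ makes $R\Gamma_{Z_0}(\pp_\Rt, \mathcal{C}) \oplus R\Gamma_{Z_\infty}(\pp_\Rt, \mathcal{C})$ an $R$\nbd-perfect complex, and the triangle above then exhibits $C$ as the cone of a map between $R$\nbd-perfect complexes --- hence $R$\nbd-finitely dominated.

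The main obstacle --- essentially the only non-routine point --- is the key lemma, together with recognising that it applies: the poles of a contracting homotopy over $\Rt\nov t$ are bounded below, so one multiplication by a power of $t$ removes them all, and this is precisely where the asymmetry built into $\Rt\nov x$ (series finite to the left) and $\Rt\nov{x\inv}$ (series finite to the right) enters, the former handling $Z_0$ and the latter $Z_\infty$. The remaining ingredients --- the extension of $C$ to $\pp_\Rt$, the $R$\nbd-perfectness of the cohomology of a perfect complex on $\pp_\Rt$, and the local cohomology bookkeeping --- are routine.
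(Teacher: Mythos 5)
Your proof is correct but takes a genuinely different route from the paper's. The paper refines the cover $\{U^{+},U^{-}\}$ of~$\pp$ by an fpqc-type cover of each affine patch through $\spec\Rt\powers{x}$, $\spec\Rt\powers{x\inv}$ and the torus, and then uses a chain of quasi-isomorphisms to exhibit $C$ as a \emph{direct summand} of $\Hy(\pp;\dd C)$, the latter being identified with the bounded complex of finitely generated free $R$-modules $H^{0}(\pp;\dd C)$. Crucially, in that argument the complexes $C^{+}\tensor_{\Rt[x]}\Rt\powers{x}$ and $C^{-}\tensor_{\Rt[x\inv]}\Rt\powers{x\inv}$ never have to be shown $R$-perfect in their own right; they simply appear as complementary summands and come along for the ride. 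You instead invoke the localisation triangle for $Z_{0}\sqcup Z_{\infty}\subset\pp$, exhibiting $C$ as a \emph{cone} rather than a summand, which then forces you to establish $R$-perfectness of $R\Gamma_{Z_{0}}\simeq C^{+}\tensor_{\Rt[x]}\Rt\powers{x}$ and its mirror image separately. The key lemma you supply for this --- a bounded complex $D$ of finitely generated free $\Rt\powers{t}$-modules with acyclic Novikov extension is $R$-finitely dominated, proved by bounding the poles of a contracting homotopy, deducing that $t^{N}$ acts nullhomotopically, and passing to $D/t^{N}D$ --- is essentially the pole-clearing mechanism at the heart of Ranicki's original algebraic proof. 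Your argument is therefore a hybrid: algebro-geometric scaffolding (extension to~$\pp$, localisation/excision, perfectness of $\Hy(\pp;\dd C)$) surrounding Ranicki's trick at the core. The paper's version buys a proof that never manipulates contracting homotopies and reduces everything to sheaf cohomology on~$\pp$; yours buys a more modular structure and a clean, reusable lemma about complexes over power series rings.

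Two small points worth making explicit if you write this up. First, since $R$ is not assumed commutative, the localisation triangle and the statement that local cohomology along $(x)$ is insensitive to passing to the $x$-adic completion must be justified by the explicit chain-level models (fibre of localisation, and the isomorphism $\Rt\laurent x/\Rt[x]\iso\Rt\nov{x}/\Rt\powers{x}$), not by citing scheme-theoretic generalities; your phrasing gestures at this but should be pinned down. Second, in the final step you should say that $C$ is a bounded complex of free $R$-modules, hence bounded below with projective terms, so that $R$-perfect does imply $R$-finitely dominated via Proposition~\ref{prop:fin_dom}.
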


The homology of the complexes $C \tensor_{\Rt \laurent x} \Rt
\nov{x}$ and $C \tensor_{\Rt \laurent x} \Rt \nov{x\inv}$ is sometimes
referred to as the \textsc{Novikov\/} homology of~$C$; the theorem
states that if the \textsc{Novikov\/} homology of~$C$ is trivial, then
$C$~is $R$\nbd-finitely dominated. The converse holds as well as shown
in \cite{MR1341811}, see also \cite{MR2978500,MR3001337}. --- The
present paper focuses on two aspects of this result: An
algebro-geometric re-interpretation of the proof given by
\textsc{Ranicki}, and an analysis of the ``freeness'' hypothesis. The
latter discussion itself is motivated by the observation that {\it the
  theorem as given does not allow for iterative application to treat
  the case of \textsc{Laurent} polynomial rings $R [x, x\inv, y,
  y\inv] = R[y,y\inv][x,x\inv]$ in two indeterminates}.  Indeed, after
one application (using $\Rt \laurent y$ in place of~$R$) we are left
with a complex of {\it projective\/} modules rather than free
ones. This is relevant as there is a $K$\nbd-theoretical obstruction
to extending chain complexes to ``sheaves on the projective line'',
which prevents a direct application of the original proof to non-free
chain complexes. The obstruction can be expressed as an element of a
quotient of $K_{0} \big( \Rt \laurent x \big)$ isomorphic to $K_{-1}
(R) \oplus NK_{0}(R) \oplus NK_{0} (R)$, and is non-zero in general.

\section{Conventions, and rules of the game}

Throughout we let $R$ and~$S$ denote rings with unit. Modules are
right modules. Our complexes are homologically indexed: The
differential of a chain complex decreases the degree. If unspecified,
complexes are allowed to be unbounded both above and below.

In algebraic geometry, the category of quasi-coherent
$\mathcal{O}_{\spec (S)}$\nbd-modules on the affine scheme $\spec (S)$
is equivalent, {\it via\/} the global sections functor, to the
category of $S$\nbd-modules. We use this equivalence as a convenient
language in the case of non-commutative rings as well. For example,
irrespective of commutativity, we say that
\begin{itemize}
\item an $S$\nbd-module is a quasi-coherent sheaf on $\spec(S)$;
\item $T = \spec \Rt \laurent x$ is the algebraic torus of
  dimension~$1$ over~$R$;
\item $P = \spec R$ is the point over~$R$;
\item the canonical map $p \colon T \rTo P$ induces a push-forward
  functor $p_{*}$ which assigns to each quasi-coherent sheaf~$M$ on~$T$ a
  quasi-coherent sheaf $p_{*} M$ on~$P$, and a pull-back
  functor~$p^{*}$.
\end{itemize}
The last point says, in module theoretic terms, that an $\Rt \laurent
x$\nbd-module~$M$ can be considered as an $R$\nbd-module $p_{*}M$ by
restriction of scalars, and that an $R$\nbd-module $N$ gives rise to
an induced $\Rt \laurent x$\nbd-module given by $p^{*}N = N
\tensor_{R} \Rt \laurent x$.

\section{Finite domination}

\begin{definition}
  A chain complex of $S$\nbd-modules, or quasi-coherent sheaves
  on~$\spec(S)$, is called
  \begin{enumerate}
  \item a {\it strict perfect complex\/} if it is bounded and consists
    of finitely generated projective $S$\nbd-modules;
  \item an {\it $S$\nbd-perfect complex\/} if it is quasi-isomorphic
    to a strict perfect complex;
  \item a {\it complex of vector bundles on $\spec S$} if it consists
    of finitely generated projective $S$\nbd-modules;
  \item a {\it complex of trivial vector bundles on $\spec S$\/} if it
    consists of finitely generated free $S$\nbd-modules;
  \item an {\it $S$\nbd-finitely dominated complex\/} if it is chain
    homotopy equivalent to a strict perfect complex.
  \end{enumerate}
\end{definition}

We will need the following trivial observation: {\it Every strict
  perfect complex~$C$ over $\spec(S)$ is a direct summand of a bounded
  complex of trivial vector bundles over $\spec(S)$; the
  complement~$C'$ is a strict perfect complex and can be chosen to
  have trivial differential.} Indeed, choose $C'_{n}$ to be a finitely
generated projective module so that $C_{n} \oplus C'_{n}$ is finitely
generated free (with $C'_{n}=0$ whenever $C_{n} = 0$) and equip $C'$
with zero-differentials.

\begin{proposition}[Characterisations of finite domination]
  \label{prop:fin_dom}
  The following five statements are equivalent for a bounded below
  chain complex~$C$ of projective $S$\nbd-modules:
  \begin{enumerate}[{\rm (1)}]
  \item The complex $C$ is $S$\nbd-perfect.
  \item The complex $C$ is $S$\nbd-finitely dominated.
  \item There exist a bounded complex~$D$ of finitely generated free
    $S$\nbd-modules and chain maps $r \colon D \rTo C$ and $s \colon C
    \rTo D$ together with a chain homotopy $r \circ s \simeq \id_{C}$.
  \item There exist a strict perfect complex~$D$ of $S$\nbd-modules
    and chain maps $r \colon D \rTo C$ and $s \colon C \rTo D$
    together with a chain homotopy $r \circ s \simeq \id_{C}$.
  \item The complex $C$ is a direct summand of an $S$\nbd-finitely
    dominated bounded below complex~$E$.
  \end{enumerate}
\end{proposition}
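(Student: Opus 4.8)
The plan is to prove all five statements equivalent through a web of implications with statement~(2) as the hub:
\[
(1)\Rightarrow(2),\qquad (2)\Rightarrow(1),\qquad (2)\Rightarrow(3)\Rightarrow(4)\Rightarrow(1),\qquad (2)\Rightarrow(5)\Rightarrow(4).
\]
Three of these need nothing beyond the definitions: a chain homotopy equivalence is a quasi\nbd-isomorphism, so $(2)\Rightarrow(1)$; a bounded complex of finitely generated free modules is in particular a strict perfect complex, so $(3)\Rightarrow(4)$; and $C$ is a direct summand of itself, so $(2)\Rightarrow(5)$ with $E=C$.

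For $(2)\Rightarrow(3)$ I would feed the ``trivial observation'' recorded just above into the homotopy equivalence supplied by~(2): fix homotopy inverse chain maps $f\colon C\rTo P$ and $g\colon P\rTo C$ with $P$ strict perfect, write $P\oplus P'=F$ with $F$ a bounded complex of finitely generated free modules and with inclusion $\iota\colon P\rTo F$, projection $\pi\colon F\rTo P$ (so $\pi\circ\iota=\id_P$), and put $D:=F$, $s:=\iota\circ f$, $r:=g\circ\pi$; then $r\circ s=g\circ(\pi\circ\iota)\circ f=g\circ f\simeq\id_C$. For $(5)\Rightarrow(4)$ I would compose retractions in the same way: writing $E=C\oplus C''$ with structure maps $\iota'\colon C\rTo E$, $\pi'\colon E\rTo C$ (so $\pi'\circ\iota'=\id_C$) and choosing a strict perfect complex~$D$ with homotopy inverse maps $u\colon D\rTo E$, $v\colon E\rTo D$ (so $u\circ v\simeq\id_E$), the maps $s:=v\circ\iota'$ and $r:=\pi'\circ u$ satisfy $r\circ s=\pi'\circ(u\circ v)\circ\iota'\simeq\pi'\circ\iota'=\id_C$.

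The two substantive arrows are $(1)\Rightarrow(2)$ and $(4)\Rightarrow(1)$, and both rest on one structural fact: a bounded below complex of projective modules is homotopically projective (every chain map from it to an acyclic complex is nullhomotopic). Granting this, a quasi\nbd-isomorphism between homotopically projective complexes is automatically a chain homotopy equivalence, so the comparison with a strict perfect complex furnished by~(1) is already the homotopy equivalence demanded by~(2)---and this is the only point where the standing hypothesis on~$C$ is used. For $(4)\Rightarrow(1)$ I would pass to the derived category $D(S)$ of $S$\nbd-modules: since $r\circ s\simeq\id_C$, the images of $r$ and~$s$ there exhibit $C$ as a direct summand of the strict perfect complex~$D$. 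Since a direct summand in $D(S)$ of a perfect complex is again perfect---this is part of the assertion that the perfect complexes form a thick subcategory of $D(S)$; alternatively, the perfect complexes are exactly the compact objects of $D(S)$, and retracts of compact objects are visibly compact---it follows that $C$ is $S$\nbd-perfect.

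I expect $(4)\Rightarrow(1)$---equivalently $(4)\Rightarrow(2)$---to be the main obstacle, not computationally but conceptually: this is the only step using finiteness of the dominating complex~$D$ in an essential way (a bounded below complex of projectives with bounded homology need \emph{not} be homotopy equivalent to a strict perfect complex in general), and it is where one must appeal to thickness of the subcategory of perfect complexes, or else---if one prefers an explicit chain-level argument in keeping with the rest of the paper---split the homotopy idempotent $s\circ r$ on the bounded complex~$D$ by hand. All remaining steps are routine manipulations with the definitions and the ``trivial observation''.
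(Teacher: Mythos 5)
Your proof is correct, and the web of implications is sound, but the organization differs noticeably from the paper's. The paper proves $(1)\Leftrightarrow(2)$ exactly as you do (quasi\nbd-isomorphisms of bounded below projective complexes are homotopy equivalences), but then it proves $(2)\Leftrightarrow(3)$ by citing Ranicki's Proposition~3.2(ii) from~\cite{MR815431} --- a chain-level argument that splits the homotopy idempotent $s\circ r$ by hand --- and closes the loop with $(3)\Leftrightarrow(4)$ via the ``trivial observation'', $(4)\Rightarrow(5)$ via the mapping cylinder of $s$ (where the projection $Z\rTo C$ is built explicitly from the chosen homotopy $r\circ s\simeq\id_C$), and $(5)\Rightarrow(4)$ by composing retractions. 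You instead make $(2)$ the hub, prove $(2)\Rightarrow(3)$ directly from the trivial observation (which, given a homotopy equivalence $C\simeq P$ with $P$ strict perfect, is essentially immediate and does \emph{not} require Ranicki's result, since the hard content of that result is the converse direction), take $(2)\Rightarrow(5)$ to be trivial with $E=C$ (avoiding the mapping cylinder entirely), and then concentrate all the difficulty into $(4)\Rightarrow(1)$, handled by the derived-category fact that perfect complexes form a thick subcategory --- equivalently, that retracts of compact objects are compact. Both routes ultimately rest on the same underlying input, namely that a homotopy retract of a strict perfect complex is again $S$\nbd-perfect (idempotent splitting in one guise or another); the paper delegates this to Ranicki's elementary chain-level argument, whereas you invoke the standard but heavier identification of perfect complexes with compact objects in $D(S)$. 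Your version is conceptually cleaner and more transparently modular, while the paper's is more self-contained and stays at the level of explicit chain homotopies, which is better suited to the elementary tone of the rest of the article.
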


\begin{proof}
  The equivalence of~(1) and~(2) follows from the fact that any
  quasi-isomorphism between bounded below complexes of projective
  modules is a homotopy equivalence. The equivalence of~(2) and~(3)
  has been shown by \textsc{Ranicki}
  \cite[Proposition~3.2~(ii)]{MR815431}. The equivalence of (3)
  and~(4) follows from the observation above that every strict perfect
  complex is a direct summand of a bounded complex of trivial vector
  bundles.  If (4) holds then $C$~is a direct summand of the mapping
  cylinder~$Z$ of~$s$ which is homotopy equivalent to~$D$ and hence
  finitely dominated (the requisite projection map $Z \rTo C$ is
  determined by the homotopy $r \circ s \simeq \id_{C}$), so that~(5)
  holds. Conversely, given~(5) there exists a strict perfect
  complex~$D$ homotopy equivalent to~$E$. Then the composition of $C
  \rTo E \simeq D$ and $D \simeq E \rTo C$ is homotopic to~$\id_{C}$
  so that (4)~holds.
\end{proof}

\section{The basic homological setup}
\label{sec:basic_hom}

In the sequel we will repeatedly need the following homological
constructions related to diagrams (in some category of modules) of the
shape
\begin{equation}
  \label{eq:shape}
  \dd M = \, \big( M^{-} \rTo^{\mu^{-}} M \lTo^{\mu^{+}} M^{+}  \big) \ .
\end{equation}
(A map of such diagrams is a triple of maps $\phi = (f^{-}, f, f^{+})$
compatible with the structure maps.) --- For a diagram of
$S$\nbd-modules of shape~\eqref{eq:shape}, we let $H(\dd M)$ denote
the chain complex $M \lTo^{-\mu^{-}+\mu^{+}} M^{-} \oplus M^{+}$
concentrated in chain degrees $-1$ and~$0$; we write $H^{q} (\dd M)$
for the $(-q)$th homology module of~$H(\dd M)$ call this the {\it
  $q${\rm th} cohomology module of~$\dd M$}. So $H^{0} (\dd M) = \ker
(-\mu^{-}+ \mu^{+})$ and $H^{1} (\dd M) = \coker (-\mu^{-}+ \mu^{+})$,
while $H^{q} (\dd M) = 0$ for $q \neq 0,1$. It can be shown that
$H^{q} (\dd M) \iso \invlim^{q} (\dd M)$ for all~$q$.

More generally, for a diagram of $S$\nbd-module chain
complexes~\eqref{eq:shape} we denote by $\Hy(\dd M)$ the totalisation
of the double complex $H(\dd M)$. More explicitly, we have $\Hy (\dd
M)_{n} = M^{-}_{n} \oplus M^{+}_{n} \oplus M_{n+1}$ with differential
given by
\[(a^{-},\, a,\, a^{+}) \mapsto \big( d^{-} (a^{-}),\, d^{+}
(a^{+}),\, -\mu^{-}(a^{-}) + \mu^{+}(a^{+}) - d(a) \big) \ ,\] where
$d$, $d^{+}$ and~$d^{-}$ are the differentials of the chain complexes
$M$, $M^{+}$ and~$M^{-}$, respectively. The complex $\Hy(\dd M)$ is
called the {\it hypercohomology chain complex of~$\dd M$}, and the
$(-q)$th homology module of~$\Hy(\dd M)$ is the {\it hypercohomology
  module $\Hy^{q}(\dd M)$}.

A map $\phi = (f^{-}, f, f^{+})\colon \dd M \rTo \dd N$ of diagrams of
chain complexes induces a map $\phi_{*} \colon \Hy (\dd M) \rTo \Hy
(\dd N)$. We also have a chain complex inclusion $\iota \colon
H^{0}(\dd M) \rTo \Hy(\dd M)$ where the source is the chain complex
obtained by applying the functor $H^{0}$ to~$\dd M$ in each chain
level.

\begin{lemma}
\label{lem:H1-principle}
If the components of~$\phi$ are all quasi-isomorphisms, then
$\phi_{*}$ is a quasi-isomorphism. If $H^{1}(\dd M) = 0$ (levelwise
application of~$H^{1}$) then the map $\iota$ is a quasi-isomorphism.
\end{lemma}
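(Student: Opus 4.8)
The plan is to recognise the hypercohomology complex $\Hy(\dd M)$ as, up to a degree shift and a sign, the mapping cone of the chain map $-\mu^{-} + \mu^{+} \colon M^{-} \oplus M^{+} \rTo M$ (this is a chain map, being assembled componentwise from the chain maps $\mu^{-}$ and $\mu^{+}$). Concretely, reading the displayed formula for the differential of $\Hy(\dd M)$ shows that the summand $M_{n+1}$ of $\Hy(\dd M)_{n}$ constitutes a subcomplex $S$, carrying the differential $-d$, whose quotient is $M^{-} \oplus M^{+}$; thus there is a short exact sequence of chain complexes
\[0 \rTo S \rTo \Hy(\dd M) \rTo M^{-} \oplus M^{+} \rTo 0\]
which is natural in $\dd M$, and $S$ is a shift of $M$, so that $H_{n}(S) \iso H_{n+1}(M)$. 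Both assertions of the lemma will be read off this sequence.

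For the first assertion, a map $\phi = (f^{-}, f, f^{+}) \colon \dd M \rTo \dd N$ visibly respects the above sequence, hence induces a map of short exact sequences and therefore a map of the associated long exact homology sequences. Under the hypothesis, $f^{-} \oplus f^{+}$ is a quasi-isomorphism (a finite direct sum of quasi-isomorphisms) and $f$ is a quasi-isomorphism, so all the vertical maps of the resulting ladder except the ones induced by $\phi_{*}$ are isomorphisms; the five lemma, applied to each five-term segment, then forces $\phi_{*}$ to be an isomorphism on every homology module, that is, a quasi-isomorphism.

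For the second assertion, vanishing of $H^{1}(\dd M)$ levelwise says precisely that $-\mu^{-}_{n} + \mu^{+}_{n} \colon M^{-}_{n} \oplus M^{+}_{n} \rTo M_{n}$ is surjective for every~$n$, with kernel $H^{0}(\dd M)_{n}$; thus $H^{0}(\dd M)$ is a subcomplex of $M^{-} \oplus M^{+}$, and $\iota$ is the inclusion carrying it into $\Hy(\dd M)_{n} = M^{-}_{n} \oplus M^{+}_{n} \oplus M_{n+1}$ with vanishing last coordinate. I would then compute the quotient $Q = \Hy(\dd M) / \iota\bigl(H^{0}(\dd M)\bigr)$: using surjectivity of $-\mu^{-}_{n} + \mu^{+}_{n}$ to identify $\bigl(M^{-}_{n} \oplus M^{+}_{n}\bigr)/H^{0}(\dd M)_{n}$ with $M_{n}$, one obtains $Q_{n} \iso M_{n} \oplus M_{n+1}$ with differential $(b, a) \mapsto \bigl(d(b),\, b - d(a)\bigr)$. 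This $Q$ is contractible: the map $h \colon Q_{n} \rTo Q_{n+1}$, $(b, a) \mapsto (a, 0)$, satisfies $d_{Q} h + h \, d_{Q} = \id_{Q}$ by a one-line computation. The long exact homology sequence of $0 \rTo H^{0}(\dd M) \rTo^{\iota} \Hy(\dd M) \rTo Q \rTo 0$ therefore shows that $\iota_{*}$ is an isomorphism on all homology modules, so $\iota$ is a quasi-isomorphism. The only steps that are not pure diagram chasing are the explicit identification of $Q$ together with its contracting homotopy and the bookkeeping of signs and shifts in the mapping-cone description; I expect this to be the main point requiring care.
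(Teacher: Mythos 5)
Your proof is correct and follows essentially the same route as the paper: for the first claim both proofs invoke the five lemma on the long exact sequence coming from the natural short exact sequence $0 \rTo M[1] \rTo \Hy(\dd M) \rTo M^{-} \oplus M^{+} \rTo 0$, and for the second claim both proofs observe that levelwise vanishing of $H^{1}$ makes $-\mu^{-}+\mu^{+}$ surjective so that $H^{0}(\dd M)$ and $\Hy(\dd M)$ become two models of the homotopy fibre of $-\mu^{-}+\mu^{+}$. The only real difference is in the amount of detail: where the paper simply asserts that $\Hy(\dd M)$ models the homotopy fibre and lets the conclusion follow, you verify the claim by hand, identifying the quotient $\Hy(\dd M)/\iota\bigl(H^{0}(\dd M)\bigr)$ with the explicit complex $M_{n}\oplus M_{n+1}$, exhibiting its contracting homotopy $h(b,a)=(a,0)$, and then reading off the result from the long exact homology sequence. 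This makes the second part self-contained and elementary at the cost of a short computation, but the underlying argument is the same.
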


\begin{proof}
  The first claim follows easily from the five lemma and the existence
  of a short exact sequence
  \[0 \rTo M[1] \rTo \Hy (\dd M) \rTo M^{-} \oplus M^{+} \rTo 0 \]
  natural with respect to maps of diagrams~$\phi$. --- For the second
  claim, note that the hypothesis $H^{1}(\dd M) = 0$ translates into
  the sequence
  \[0 \lTo M \lTo^{-\mu^{-}+\mu^{+}} M^{-} \oplus M^{+} \lTo^{\iota}
  H^{0} (\dd M) \lTo 0\] being exact (in each chain level) so that
  $H^{0}(\dd M)$ is quasi-isomorphic to the ``homotopy fibre'' of the
  map $-\mu^{-} + \mu^{+}$ (just as $M$ is then quasi-isomorphic to
  the mapping cone of~$\iota$). The complex $\Hy (\dd M)$ is a model
  for the homotopy fibre.
\end{proof}

\section{The projective line}
\label{sec:p1}

We will now define categories of modules on the projective line $\pp$
over~$R$, in the spirit of \textsc{Bass} \cite[\S{}XII.9]{MR0249491}.

\begin{definition}[Quasi-coherent sheaves and vector bundles]\
  \label{def:sheaf}
  \begin{enumerate}
  \item A {\it quasi-coherent sheaf on~$\pp$}, or just {\it sheaf\/}
    for short, is a diagram~$\dd M$ of the form~\eqref{eq:shape}
    such that
    \begin{itemize}
    \item the entries are modules over the rings $\Rt[x\inv]$, $\Rt
      \laurent x$ and $\Rt[x]$, respectively;
    \item the maps $\mu^{-}$ and~$\mu^{+}$ are $\Rt[x\inv]$\nbd-linear
      and $\Rt[x]$\nbd-linear, respectively;
    \item both adjoint maps
      \[M^{-} \tensor_{\Rt[x\inv]} \Rt \laurent x \rTo M \lTo M^{+}
      \tensor_{\Rt[x]} \Rt \laurent x\] are isomorphisms of $\Rt
      \laurent x$\nbd-modules.
    \end{itemize}
  \item The category of quasi-coherent sheaves is denoted
    $\qco(\pp)$. Morphisms are triples $(f^{-},\, f,\, f^{+})$ of
    linear maps compatible with the structure maps.
  \item The sheaf \eqref{eq:shape} is a {\it vector bundle\/}
    if all its constituent modules are finitely generated projective
    over their respective ground rings.
  \end{enumerate}
\end{definition}

It might be useful to recall here that $\pp$ has a
\textsc{Zariski}-open cover consisting of the affine lines $U^{+} =
\spec R[x]$ and $U^{-} = \spec R[x\inv]$, with intersection $U^{-} \cap
U^{+} = T = \spec R \laurent x \subset \pp$.

\begin{definition}
  Given a sheaf~$\dd M$ as in~\ref{def:sheaf} and an integer~$n$, we
  denote by $\dd M(n)$ any sheaf of the form $M^{-} \rTo^{x^{k}
    \mu^{-}} M \lTo^{x^{-\ell} \mu^{+}} M^{+}$ with $k,\ell \in \bZ$
  such that $k + \ell = n$ (all these sheaves are isomorphic in
  $\qco(\pp)$). We call $\dd M(n)$ the {\it $n${\rm th} twist of~$\dd
    M$}.
\end{definition}

In the context of sheaves, we will write $H(\pp; \dd M)$ for $H(\dd
M)$ to emphasise the similarity to algebraic geometry (in fact, the
modules $H^{q} (\pp; \dd M)$ are sheaf cohomology modules, computed
from a \textsc{\v Cech} complex, in case $R$ is a commutative
ring). We similarly use the notation $\Hy(\pp; \dd M)$ to denote
$\Hy(\dd M)$ for a chain complex~$\dd M$ of sheaves.

\begin{example}
  \label{example:H_of_On}
  The $n$th {\it twisting sheaf}, denoted $\mathcal{O}(n)$, is a
  vector bundle of the form
  \[\Rt[x \inv] \rTo[l>=3em]^{x^{k}} \Rt \laurent x
  \lTo[l>=3em]^{x^{-\ell}} \Rt[x] \] with $k, \ell \in \bZ$ such that
  $k + \ell = n$; the structure maps are inclusions followed by
  multiplication.  For $q,n \in \bZ$ we have
  \[H^{q} \big (\pp; \mathcal{O}(n) \big) \iso
  \begin{cases}
    R^{n+1} & \text{for \(q=0\) and \(n \geq 0\);} \\
    R^{-n-1} & \text{for \(q=1\) and \(n \leq -2\);} \\
    0 & \text{otherwise.}
  \end{cases}\] In fact, the $H^{q} \big( \pp; \mathcal{O}(n) \big)$
  can be identified with free $R$\nbd-submodules of the $R$\nbd-module
  $\Rt \laurent x$; bases are $\{x^{-\ell}, x^{-\ell+1}, \cdots,
  x^{k}\}$ in the first case, and $\{x^{k+1}, x^{k+2}, \cdots,
  x^{-\ell-1}\}$ in the second.
\end{example}

\begin{lemma}[Extending morphisms from~$T$ to~$\pp$]
  \label{lem:extend_map}
  Suppose we have two sheaves $\dd Z = (Z^{-} \rTo^{\zeta^{-}} Z
  \lTo^{\zeta^{+}} Z^{+})$ and $\dd Y = (Y^{-} \rTo^{\upsilon^{-}} Y
  \lTo^{\upsilon^{+}} Y^{+})$ on~$\pp$, and a homomorphism $f \colon Z
  = \dd Z|_{T}\rTo \dd Y|_{T} = Y$. Suppose that $Z^{-}$ and~$Z^{+}$
  are finitely generated, and that $\upsilon^{-}$ and~$\upsilon^{+}$
  are injective (equivalently, $Y^{\pm}$ have no $x^{\pm
    1}$\nbd-torsion). Then there exist integers $k, \ell \geq 0$ and
  homomorphisms $f^{\pm} \colon Z^{\pm} \rTo Y^{\pm}$ fitting into a
  commutative diagram
  \begin{diagram}[s=1.1cm]
    Z^- & \rTo^{\zeta^{-}} & Z & \lTo^{\zeta^{+}} & Z^+\\
    \dTo>{f^-} && \dTo>f && \dTo>{f^+} \\
    Y^- & \rTo^{x^k \upsilon^{-}} & Y & \lTo^{x^{-\ell} \upsilon^{+}}
    & Y^+
  \end{diagram}
  so that $f$ extends to a map of sheaves $(f^{-}, f, f^{+}) \colon
  \dd Z \rTo \dd Y(k+\ell)$.
\end{lemma}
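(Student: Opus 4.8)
The plan is to construct $f^{+}$ and $f^{-}$ separately by a localisation argument; the key point is that the sheaf axioms for $\dd Y$ realise $Y$ both as a localisation of $Y^{+}$ and as a localisation of $Y^{-}$. Indeed, the adjointness isomorphism $Y \iso Y^{+} \tensor_{\Rt[x]} \Rt \laurent x$ identifies $Y$ with the localisation of $Y^{+}$ at the (central) multiplicative set $\{x^{j} \mid j \geq 0\}$, with $\upsilon^{+}$ becoming the canonical map to the localisation; its kernel is exactly the submodule of $x$\nbd-power torsion elements, so injectivity of $\upsilon^{+}$ is equivalent to $Y^{+}$ having no $x$\nbd-torsion, and under that hypothesis every element of $Y$ has the form $x^{-j} \upsilon^{+}(w)$ for some $j \geq 0$ and $w \in Y^{+}$. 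The mirror statement holds for $\upsilon^{-}$, with $x\inv$ in the role of~$x$.

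To construct $f^{+}$, consider the $\Rt[x]$\nbd-linear composite $g = f \circ \zeta^{+} \colon Z^{+} \to Y$. Choose finitely many generators $z_{1}, \dots, z_{m}$ of the $\Rt[x]$\nbd-module $Z^{+}$, write $g(z_{i}) = x^{-j_{i}} \upsilon^{+}(w_{i})$ with $j_{i} \geq 0$ and $w_{i} \in Y^{+}$, and put $\ell = \max_{i} j_{i} \geq 0$ (with $\ell = 0$ when $Z^{+}=0$). Since $\upsilon^{+}$ is $\Rt[x]$\nbd-linear we get $x^{\ell} g(z_{i}) = \upsilon^{+}(x^{\ell-j_{i}} w_{i}) \in \upsilon^{+}(Y^{+})$, hence $x^{\ell} g(Z^{+}) \subseteq \upsilon^{+}(Y^{+})$. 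As $\upsilon^{+}$ is injective, post-composing with its partial inverse on this submodule yields a well-defined $\Rt[x]$\nbd-linear map $f^{+} = (\upsilon^{+})\inv \circ (x^{\ell} g) \colon Z^{+} \to Y^{+}$, and by construction $x^{-\ell} \upsilon^{+} \circ f^{+} = g = f \circ \zeta^{+}$ as maps into~$Y$. Running the symmetric argument over $\Rt[x\inv]$ produces an integer $k \geq 0$ and an $\Rt[x\inv]$\nbd-linear map $f^{-} = (\upsilon^{-})\inv \circ (x^{-k} f \circ \zeta^{-}) \colon Z^{-} \to Y^{-}$ with $x^{k} \upsilon^{-} \circ f^{-} = f \circ \zeta^{-}$.

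Since $\dd Y(k+\ell)$ is, by the definition of twist, (isomorphic to) the sheaf $Y^{-} \rTo^{x^{k}\upsilon^{-}} Y \lTo^{x^{-\ell}\upsilon^{+}} Y^{+}$ — a legitimate representative of the twist of index $k+\ell$ because $k, \ell \geq 0$ — the two displayed identities say precisely that $(f^{-}, f, f^{+})$ is compatible with the structure maps, hence is a morphism of sheaves $\dd Z \rTo \dd Y(k+\ell)$ restricting to~$f$ over~$T$, which is what we want. I do not expect a genuine obstacle here; the step that needs care is the well-definedness of the $f^{\pm}$, which relies on injectivity of $\upsilon^{\pm}$ to make $(\upsilon^{\pm})\inv$ an honest additive and $\Rt[x^{\pm1}]$\nbd-linear map on the submodule $\upsilon^{\pm}(Y^{\pm})$. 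It is worth flagging that finite generation of $Z^{\pm}$ — rather than finite presentation — is exactly what guarantees that a single power of $x$ clears all denominators simultaneously, so no statement about localising a $\mathrm{Hom}$\nbd-module is needed.
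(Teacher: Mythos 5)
Your proof is correct and follows essentially the same route as the paper: finite generation of~$Z^{\pm}$ is used to clear all denominators with a single power of~$x$, and injectivity of~$\upsilon^{\pm}$ makes the lift well-defined. The only difference is presentational --- you invert $\upsilon^{+}$ directly on its image, whereas the paper passes through a finitely generated free cover $F^{+}\twoheadrightarrow Z^{+}$ and then checks that the lifted map~$\phi^{+}$ annihilates $\ker(\epsilon^{+})$; both verifications reduce to the same appeal to injectivity of~$\upsilon^{+}$.
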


\begin{proof}
  Choose an epimorphism $\epsilon^{+} \colon F^{+} \rTo Z^{+}$, with
  $F^{+}$ finitely generated free. The composition $f \circ \zeta^{+}
  \circ \epsilon^{+}$ factors as $F^{+} \rTo^{\phi^{+}} Y^{+}
  \rTo^{x^{-\ell} \upsilon^{+}} Y$, for some~$\ell \geq 0$, as $Y \iso
  Y^{+} \tensor_{\Rt [x]} \Rt \laurent x$ is the sequential colimit of
  \begin{displaymath}
    Y^{+} \rTo^{x} Y^{+} \rTo^{x} \cdots \ .
  \end{displaymath}
  Now the image of $\ker (\epsilon^{+})$ under~$\phi^{+}$ in~$Y^{+}$
  is trivial; indeed, the map $x^{-\ell} \upsilon^{+}$ is injective,
  so we can check triviality by post-composing with this map. But
  then, by construction, we are reduced to showing that the image of
  $\ker(\epsilon^{+})$ in~$Y$ under the map $x^{-\ell} \upsilon^{+}
  \circ \phi^{+} = f \circ \zeta^{+} \circ \epsilon^{+}$ is trivial,
  which is clear. It follows that $\phi^{+}$ induces a map $f^{+}
  \colon Z^{+} = F^{+}/\ker(\epsilon^{+}) \rTo Y^{+}$ which, when
  post-composed with~$x^{-\ell}\upsilon^{+}$, agrees with $f \circ
  \zeta^{+}$. --- The construction of $f^{-}$ follows the symmetric
  procedure.
\end{proof}

\section{An algebro-geometric approach to Theorem~A}
\label{sec:criterion}

We write $\Rt \powers x$ for the ring of formal power series in~$x$;
its localisation $\Rt \nov x = \Rt \powers{x}_{x}$ by~$x$ is the ring
of formal \textsc{Laurent\/} series in~$x$. We have the variants $\Rt
\powers {x\inv}$ and $\Rt \nov{x \inv}$ of formal power series and
formal \textsc{Laurent} series in~$x\inv$. Clearly $\Rt \nov x \cap
\Rt \nov{x \inv} = \Rt \laurent x$.

The relevant observation for us is that $N^{+} = \spec \Rt \powers x$
is an infinitesimal (formal) neighbourhood of $0 \in U^{+} \subset
\pp$, and that $\spec \Rt \nov x$ corresponds to $N^{+} \setminus
\{0\}$. Replacing $x$ by $x\inv$ gives infinitesimal neighbourhoods of
$\infty \in U^{-} \subset \pp$, with and without the point $\infty$
included.

\begin{definition}[Strict perfect and perfect complexes]
  A chain complex $\dd C = (C^{-} \rTo C \lTo C^{+})$ of sheaves is
  called {\it strict perfect\/} if it is a bounded complex of vector
  bundles. We call $\dd C$ {\it perfect\/} if it is connected by a
  chain of quasi-isomorphisms in~$\qco(\pp)$ to a strict perfect
  complex. Here a {\it quasi-isomorphism\/} is a map of complexes of
  sheaves such that all three of its constituent maps of chain
  complexes of modules are quasi-isomorphisms.
\end{definition}

\begin{theorem}[Algebro-geometric reformulation of Theorem~A]
  \label{thm:reformulation}
  Let $C$ be a bounded complex of trivial vector bundles on the
  algebraic torus~$T$. The push-forward $p_{*} C$ is an
  $R$\nbd-perfect complex if $C$ is homologically trivial
  infinitesimally near~$0$ and~$\infty$ (that is, on~$N^{+} \setminus
  \{0\}$ and~$N^{-} \setminus \{\infty\}$).
\end{theorem}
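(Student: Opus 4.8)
The plan is to extend $C$ --- viewed as a complex of sheaves on the torus $T=U^{-}\cap U^{+}\subset\pp$ --- to a strict perfect complex $\dd C=(\dd C^{-}\rTo C\lTo\dd C^{+})$ of sheaves on $\pp$, and then to read $R$\nbd-perfectness of $p_{*}C$ off a distinguished triangle that compares $p_{*}C$ with $\Hy(\pp;\dd C)$ and with the two ``missing points'' $0$ and $\infty$. Explicitly, one applies the octahedral axiom to the composite
\[\dd C^{+}\oplus\dd C^{-}\xrightarrow{(\mu^{+},\,\mu^{-})}C\oplus C\xrightarrow{(\id,\,-\id)}C \ ,\]
in which the first arrow is a levelwise split injection with cone $(C/\dd C^{+})\oplus(C/\dd C^{-})$, the second is a levelwise split surjection with kernel $C$, and the two-step composite equals $\mu^{+}-\mu^{-}$, whose cone is $\Hy(\pp;\dd C)[1]$ since $\Hy(\pp;\dd C)$ is by construction the homotopy fibre of $\mu^{+}-\mu^{-}$. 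This yields a distinguished triangle of complexes of $R$\nbd-modules
\[\bigl(C/\dd C^{+}\oplus C/\dd C^{-}\bigr)[-1]\rTo\Hy(\pp;\dd C)\rTo p_{*}C\rTo C/\dd C^{+}\oplus C/\dd C^{-} \ ,\]
valid for \emph{any} such extension $\dd C$. Thus it suffices to show that $\Hy(\pp;\dd C)$ and both $C/\dd C^{\pm}$ are $R$\nbd-perfect: the former because $\pp$ is ``proper'' over $R$, the latter because $C$ is homologically trivial near $0$ and $\infty$.

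First I would construct the extension. After rescaling the free generators of $C$ (an isomorphism of complexes) one may assume the differentials of $C$ have matrix entries in $\Rt[x]$; the $\Rt[x]$\nbd-span of the standard bases is then a bounded complex $\dd C^{+}$ of finitely generated free $\Rt[x]$\nbd-modules with $\dd C^{+}\tensor_{\Rt[x]}\Rt\laurent x\iso C$ on which the differentials act. One cannot in general keep a free $\Rt[x\inv]$\nbd-lattice stable at the same time, so instead one builds the complex of vector bundles $\dd C$ one chain level at a time, using Lemma~\ref{lem:extend_map} to extend each differential across $\pp$ after a suitable twist and bookkeeping the twists so that a genuine complex of sheaves results. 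This produces a strict perfect complex $\dd C$ on $\pp$ with $\dd C|_{T}\iso C$. Since passing from $\dd C$ to a twist $\dd C(n)$ affects neither $\dd C|_{T}$, nor the triangle above, nor the argument below, we may and do assume in addition that $n$ is large enough that $H^{1}(\pp;\dd C_{k})=0$ for every chain index $k$.

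Next I would check that $\Hy(\pp;\dd C)$ is $R$\nbd-perfect. By the vanishing just arranged and Lemma~\ref{lem:H1-principle}, the chain map $\iota\colon H^{0}(\pp;\dd C)\rTo\Hy(\pp;\dd C)$ is a quasi-isomorphism, so it is enough that $H^{0}(\pp;\dd C)$ be a bounded complex of finitely generated projective $R$\nbd-modules. But in chain degree $k$ the module $H^{0}(\pp;\dd C_{k})$ is the intersection, inside $C_{k}$, of the free $\Rt[x]$\nbd-lattice $\dd C^{+}_{k}$ with the free $\Rt[x\inv]$\nbd-lattice $\dd C^{-}_{k}$ (all structure maps being injective), and such an intersection is a finitely generated free $R$\nbd-module --- compare the computation of global sections in Example~\ref{example:H_of_On}. (Alternatively, one records once and for all that $\Hy(\pp;-)$ carries vector bundles on $\pp$ to $R$\nbd-perfect complexes, a form of properness of $\pp$ over $R$, and reduces the complex case to this by induction on length.)

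Finally, the homological triviality hypothesis is used to make $C/\dd C^{+}$ and $C/\dd C^{-}$ $R$\nbd-perfect. Since $\Rt\laurent x/\Rt[x]\iso\Rt\nov x/\Rt\powers x$ as $\Rt[x]$\nbd-modules, tensoring the short exact sequence $0\rTo\Rt\powers x\rTo\Rt\nov x\rTo\Rt\nov x/\Rt\powers x\rTo 0$ with the complex of free $\Rt[x]$\nbd-modules $\dd C^{+}$ gives a short exact sequence of complexes
\[0\rTo E^{+}\rTo C\tensor_{\Rt\laurent x}\Rt\nov x\rTo C/\dd C^{+}\rTo 0 \ ,\]
where $E^{+}:=\dd C^{+}\tensor_{\Rt[x]}\Rt\powers x$ and the middle term is $\dd C^{+}\tensor_{\Rt[x]}\Rt\nov x\iso C\tensor_{\Rt\laurent x}\Rt\nov x$, acyclic by hypothesis (triviality on $N^{+}\setminus\{0\}$); hence $C/\dd C^{+}\iso E^{+}[1]$ in the derived category. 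It therefore remains to establish the self-contained one-variable statement that a bounded complex $E$ of finitely generated free $\Rt\powers x$\nbd-modules with $E\tensor_{\Rt\powers x}\Rt\nov x$ acyclic is $R$\nbd-perfect. Indeed $E$ is perfect over $\Rt\powers x$, so $\mathbf{R}\mathrm{Hom}_{\Rt\powers x}(E,E)$ is perfect over $\Rt\powers x$ and becomes acyclic after inverting $x$; thus $\mathrm{Hom}_{D(\Rt\powers x)}(E,E)$ is $x$\nbd-power torsion, $x^{N}\cdot\id_{E}=0$ in $D(\Rt\powers x)$ for some $N$, and the map $x^{N}\colon E\rTo E$ is null-homotopic. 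Consequently $\cone(x^{N}\colon E\rTo E)\iso E\oplus E[1]$ as complexes of $R$\nbd-modules, while $\cone(x^{N}\colon E\rTo E)=E/x^{N}E$ is a bounded complex of finitely generated free $R$\nbd-modules (as $\Rt[x]/x^{N}\iso R^{N}$ over $R$); so $E$ is a chain-complex direct summand of a strict perfect complex of $R$\nbd-modules, hence $R$\nbd-finitely dominated by Proposition~\ref{prop:fin_dom}, hence $R$\nbd-perfect. The symmetric argument treats $C/\dd C^{-}$, using $\Rt\nov{x\inv}$, $\Rt\powers{x\inv}$ and the triviality on $N^{-}\setminus\{\infty\}$. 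Two-out-of-three in the triangle above now shows that $p_{*}C$ is $R$\nbd-perfect. The heart of the matter is this last one-variable reduction --- the point at which ``acyclic infinitesimally near $0$'' is converted into ``perfect over $R$'' --- together with the somewhat technical level-by-level construction of the extension $\dd C$.
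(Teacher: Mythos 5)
Your proof is correct, but it follows a genuinely different route from the paper's. Both arguments start by extending $C$ to a strict perfect complex $\dd C$ on $\pp$ with $H^{0}(\pp;\dd C)$ a bounded complex of finitely generated free $R$\nbd-modules and $H^{1}(\pp;\dd C)=0$, and both then use Lemma~\ref{lem:H1-principle} to identify $\Hy(\pp;\dd C)$ with this $R$\nbd-perfect complex. Where they diverge is in how $\Hy(\pp;\dd C)$ is related to $p_{*}C$. The paper refines each $C^{\pm}$ by an fpqc-type hypercohomology argument to produce a quasi-isomorphism $C^{\pm}\simeq\bigl(C^{\pm}\tensor\Rt\powers{x^{\pm 1}}\bigr)\oplus C$ under the acyclicity hypothesis, whence $\Hy(\pp;\dd C)$ is quasi-isomorphic to a complex containing $C$ as a \emph{direct summand}; Proposition~\ref{prop:fin_dom}(5) then closes the proof immediately, with no further analysis of the summands $C^{\pm}\tensor\Rt\powers{x^{\pm 1}}$. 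You instead place $p_{*}C$ in a distinguished triangle with $\Hy(\pp;\dd C)$ and $\bigl(C/\dd C^{+}\bigr)\oplus\bigl(C/\dd C^{-}\bigr)$ via the octahedral axiom, which is more elementary to set up but does \emph{not} split; to finish by two-out-of-three you then need the extra input that each $C/\dd C^{\pm}\simeq E^{\pm}[1]$ is $R$\nbd-perfect, which you establish by the $x^{N}$\nbd-nilpotence trick for $E^{\pm}=\dd C^{\pm}\tensor\Rt\powers{x^{\pm 1}}$. In effect the paper proves that your triangle is split (so $E^{\pm}$ never needs examination), while you keep the triangle unsplit and separately prove the self-contained one-variable statement that a bounded complex of finitely generated free $\Rt\powers x$\nbd-modules which becomes acyclic over $\Rt\nov x$ is $R$\nbd-perfect --- an attractive and reusable lemma, but an extra ingredient the paper avoids. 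Your argument for this lemma is correct: $\mathbf{R}\mathrm{Hom}_{\Rt\powers x}(E,E)$ localises, at the central element $x$, to $\mathbf{R}\mathrm{Hom}_{\Rt\nov x}(E_{x},E_{x})\simeq 0$, so $\id_{E}$ is annihilated by $x^{N}$ in the derived category, $x^{N}$ is null-homotopic, and $E$ is a homotopy retract of $E/x^{N}E$, a strict perfect $R$\nbd-module complex. One small caveat in the level-by-level construction: the $H^{1}$\nbd-vanishing you want is secured by taking $\dd Y=\bigoplus\mathcal{O}(0)$ at each stage (as in Proposition~\ref{prop:extend_trivial}), rather than by a global twist applied after the fact, since the twists $n_{k}$ produced by Lemma~\ref{lem:extend_map} already satisfy $n_{k}\geq 0$ and a single global twist of the finished complex is unnecessary.
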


We begin the proof by observing that bounded complexes of trivial
vector bundles on~$T$ extend to strict perfect complexes on~$\pp$:

\begin{proposition}[Extending complexes of trivial vector bundles]
  \label{prop:extend_trivial}
  Given a bounded chain complex $C$ of trivial vector bundles on~$T$,
  there exists a strict perfect complex $\dd V =
  (V^{-} \rTo V \lTo V^{+})$ of sheaves such that
  \begin{enumerate}[{\rm (1)}]
  \item the complexes $V^{-}$ and~$V^{+}$ consist of finitely
    generated free modules;
  \item the restriction $V = \dd V|_{T}$ is isomorphic to the complex~$C$;
  \item the complex of global sections $H^{0}(\pp; \dd V) = \invlim
    \dd V$ is a bounded complex of finitely generated free
    $R$\nbd-modules;
  \item the higher cohomology modules $H^{j}(\pp; \dd V) = \invlim^{j}
    \dd V$ are trivial for all $j \geq 1$ in each chain degree.
  \end{enumerate}
\end{proposition}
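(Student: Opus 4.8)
The plan is to build the extension $\dd V$ degreewise. Since $C$ is a bounded complex of trivial vector bundles on $T = \spec \Rt\laurent x$, in each chain degree $n$ we have $C_n \iso (\Rt\laurent x)^{b_n}$ for some $b_n \geq 0$, with only finitely many $b_n$ nonzero. The obvious choice is to take $V^+_n = \Rt[x]^{b_n}$ and $V^-_n = \Rt[x\inv]^{b_n}$ and $V_n = C_n = (\Rt\laurent x)^{b_n}$, with the structure maps the standard inclusions, so that $\dd V_n \iso \mathcal{O}(0)^{b_n} = \mathcal{O}^{b_n}$ as a single sheaf. This immediately gives properties (1) and (2) for the underlying graded objects. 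The real work is to realise the differential $d\colon C_n \rTo C_{n-1}$ as a map of sheaves $\dd V_n \rTo \dd V_{n-1}$, and here Lemma~\ref{lem:extend_map} does not quite apply on the nose because it only extends a morphism after a twist $\dd V_{n-1}(k+\ell)$; so I would instead argue directly.

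First I would set up the extension of the differential. Fix a basis and write the matrix of $d\colon C_n \rTo C_{n-1}$ over $\Rt\laurent x$; its entries are finite \textsc{Laurent} polynomials, so there exist integers $k, \ell \geq 0$ (uniform over the finitely many nonzero degrees) such that multiplying by $x^{-\ell}$ clears all negative powers and the resulting matrix has entries in $\Rt[x]$, while multiplying by $x^{k}$ clears all positive powers giving entries in $\Rt[x\inv]$. This is exactly the statement that $d$ extends to a map of sheaves after suitable twisting on the source. To get an honest chain complex I would instead twist the \emph{whole} complex compatibly: replace $\dd V_n$ by its $(nk + n\ell)$th twist (or any monotone reindexing), i.e. take $V^+_n = \Rt[x]^{b_n}$ still but let the structure map into $V_n$ be multiplication by $x^{-n\ell}$ and the structure map from $V^-_n$ be multiplication by $x^{nk}$, so that $\dd V_n \iso \mathcal{O}(n(k+\ell))^{b_n}$; with this choice the cleared matrices $x^{-\ell}$-adjusted and $x^{k}$-adjusted assemble into $\Rt[x]$-linear and $\Rt[x\inv]$-linear differentials on $V^+$ and $V^-$ that are compatible with the structure maps and restrict to $d$ on $T$. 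One must check $d\circ d = 0$ on $V^\pm$, but this follows because $V^\pm \rTo V$ is injective ($\Rt[x]$ and $\Rt[x\inv]$ have no torsion) and $d\circ d = 0$ holds on $V$. This secures (1) and (2).

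For (3) and (4) I would compute the cohomology of $\dd V$. Because $\dd V_n \iso \mathcal{O}(N_n)^{b_n}$ with $N_n = n(k+\ell) \geq 0$ for $n \geq 0$ (and we may shift the complex so that all relevant degrees are $\geq 0$, at the cost of which one twists up further — alternatively restrict attention to $N_n$ large enough in every occurring degree, which is possible since there are finitely many), Example~\ref{example:H_of_On} gives $H^0(\pp; \dd V_n) \iso R^{N_n + 1}$ free and $H^1(\pp; \dd V_n) = 0$ in each degree. The vanishing $H^1(\pp; \dd V_n) = 0$ levelwise is precisely the hypothesis needed to invoke the second half of Lemma~\ref{lem:H1-principle}, but more simply it says that the functor $H^0(\pp; -) = \invlim$ applied to $\dd V$ degreewise is exact on the relevant short exact sequences, so $H^0(\pp; \dd V)$ is a bounded complex of finitely generated free $R$-modules, giving (3), and all higher $\invlim^j$ vanish, giving (4). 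The one genuine subtlety — and the step I expect to be the main obstacle — is choosing the twist uniformly: the differentials in different degrees need a common $k, \ell$, and one must ensure that after twisting, \emph{every} chain module that can appear has a nonnegative twist so that Example~\ref{example:H_of_On} yields freeness and $H^1 = 0$; since $C$ is bounded this is a finite condition and can always be met by taking $k, \ell$ large, so the obstacle is bookkeeping rather than substance.
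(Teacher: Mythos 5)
Your overall strategy is the same as the paper's: extend each chain module by a sum of twisting sheaves $\mathcal{O}(n_{m})$ with $n_{m}\ge 0$, use the injectivity of the structure maps of $\mathcal{O}(n)$ to transfer $d\circ d = 0$ to~$V^{\pm}$, and read off properties~(3) and~(4) from Example~\ref{example:H_of_On}. However, as written your explicit twist formula has a sign error that makes the construction fail, and the fix you propose for the normalisation issue is also incorrect.

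Concretely: if the differential $\partial_n\colon V_n\to V_{n-1}$ has a matrix with $x$-powers in $[-\ell,k]$ and the structure map $V^{+}_{n}\to V_{n}$ is multiplication by $x^{-b_{n}}$, then compatibility forces $\partial^{+}_{n} = x^{\,b_{n-1}-b_{n}}\partial_n$, so one needs $b_{n-1}\ge b_{n}+\ell$ (and likewise $a_{n-1}\ge a_{n}+k$). The twist therefore must \emph{increase as the chain degree decreases}. With your choice $a_n=nk$, $b_n=n\ell$ (structure maps $x^{nk}$ and $x^{-n\ell}$, twist $n(k+\ell)$) you get $b_{n-1}-b_n = -\ell$, the wrong sign, and $x^{-\ell}\partial_n$ does \emph{not} lie over $R[x]$. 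Flipping the signs (so $\partial^{+}_n = x^{\ell}\partial_n$, twist $-n(k+\ell)$) fixes compatibility, but then the twist is \emph{negative} in all positive chain degrees, so Example~\ref{example:H_of_On} no longer gives $H^{1}=0$ or freeness of $H^{0}$ there; this cannot be repaired by ``taking $k,\ell$ large'' — enlarging $k,\ell$ only drives the positive-degree twists further negative. What one needs is a constant shift, e.g.\ twist by $(q-n)(k+\ell)$ with $q$ the top degree of~$C$, so that all twists are $\ge 0$. Finally, your reason for bypassing Lemma~\ref{lem:extend_map} is misplaced: the paper's downward induction \emph{does} apply that lemma on the nose — it sets $\dd Y=\bigoplus_r\mathcal{O}(0)$ as a placeholder in degree~$m$ and then defines $\dd V_{m}:=\dd Y(k+\ell)$, so the twist the lemma produces is absorbed into the definition and automatically accumulates correctly with $n_m\ge 0$ at every step. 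That inductive bookkeeping is exactly what avoids the sign and normalisation pitfalls you ran into.
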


In fact, given a finitely generated free $R \laurent x$\nbd-module $M$
of rank\footnote{By rank we mean the cardinality of some basis of~$M$;
  this number might not be uniquely determined by~$M$ unless $R$ has
  the invariant basis property}~$r$, the sheaf $\bigoplus_{r}
\mathcal{O}(n)$, for any $n \in \bZ$, has the property that its
restriction to~$T$ is isomorphic to~$M$. The main point is that the
differentials can be lifted to~$\pp$ as well, by downward induction on
the chain degree: Assuming $\dd V_{m+1}$ has been constructed, set
$\dd Y = \bigoplus_{r} \mathcal{O}(0)$ for $r$ a rank of~$C_{m}$,
and apply Lemma~\ref{lem:extend_map} to $\dd Z = \dd V_{m+1}$ and $f
= \partial \colon V_{m+1} \rTo V_{m} \iso \big(\Rt \laurent
x\big)^{r}$. Finally set $\dd V_{m} = \dd Y(k+\ell)$.  Since the
structure maps of~$\mathcal{O}(n)$ are injective, it is easy to check
that the extension of~$f$ to~$\pp$ is a differential. The Proposition
now follows from the calculations in Example~\ref{example:H_of_On}.

\medbreak

Back to Theorem~\ref{thm:reformulation}, let us thus extend our
complex~$C$ to a strict perfect complex $\dd C = (C^{-} \rTo^{\mu^{-}}
C \lTo^{\mu^{+}} C^{+})$ as described in
Proposition~\ref{prop:extend_trivial}. From
Lemma~\ref{lem:H1-principle} we know that the map $\iota \colon H^{0}
(\pp; \dd C) \rTo \Hy (\pp; \dd C)$ is a quasi-isomorphism; note that
the source of this map is a bounded complex of finitely generated free
$R$\nbd-modules, by construction.

Observe that for $R$ commutative and \textsc{noether}ian, the rings $R
\powers x$ and $R \laurent x$ are flat over~$R[x]$ so that $\spec R
\powers x \amalg \spec R \laurent x \rTo \spec R[x]$ is a covering of
the affine line in the fpqc topology; furthermore, the intersection of
the two covering sets is $\spec R \nov x$. So for general~$R$, we
represent the restriction~$C^{+} = \dd C|_{U^{+}}$ of~$\dd C$ to
$U^{+} = \spec \Rt[x]$ in the fpqc topology by the diagram
\begin{equation}
  \label{eq:c+fpqc}
  \dd C^{+}_{\fpqc} = \, \Big( C^{+} \mathop{\tensor}_{\Rt[x]} \Rt \powers
  x \rTo C^{+} \mathop{\tensor}_{\Rt[x]} \Rt \nov x \lTo C^{+}
  \tensor_{\Rt [x]} \Rt \laurent x \Big) \ .
\end{equation}
Since $C^{+}$ consists of free $\Rt[x]$\nbd-modules, and since the
$\Rt[x]$\nbd-module diagram $\Rt \powers x \rTo \Rt \nov x \lTo R
\laurent x$ has $H^{0} = R[x]$ and $H^{1} = 0$, the first map in the
composition $\kappa^{+}$ of canonical maps
\[C^{+} \rTo^{\iso} H^{0}_{\fpqc} (U^{+}; \dd C^{+}_{\fpqc})
\rTo^{\simeq}_{\iota} \Hfpqc (U^{+}; \dd C)\] is an isomorphism, where
the source of~$\iota$ stands for the chain complex $H^{0} (\dd
C^{+}_{\fpqc})$ and the target for $\Hy (\dd
C^{+}_{\fpqc})$. Moreover, the structure isomorphism $C^{+}
\tensor_{\Rt [x]} \Rt \laurent x \iso C$ of~$\dd C$ results in a chain
map $\lambda^{+} \colon \Hfpqc (U^{+}; \dd C) \rTo C$ such that
$\mu^{+} = \lambda^{+} \circ \kappa^{+}$.

We have an analogous quasi-isomorphism $\kappa^{-} \colon C^{-} \rTo
\Hfpqc (U^{-}; \dd C) = \Hy (\dd C^{-}_{\fpqc})$, and a factorisation
$\mu^{-} = \lambda^{-} \circ \kappa^{-} \colon C^{-} \rTo C$.

Now note that by hypothesis the chain complex
\[C^{+} \mathop{\tensor}_{\Rt[x]} \Rt \nov x \iso C^{+}
\mathop{\tensor}_{\Rt[x]} \Rt \laurent x \mathop{\tensor}_{\Rt
  \laurent x} \Rt \nov x \iso C \mathop{\tensor}_{\Rt \laurent x} \Rt
\nov x \] is acyclic. This means we can replace the middle entry
in~\eqref{eq:c+fpqc} by the trivial chain complex; by
Lemma~\ref{lem:H1-principle}, the resulting map
\begin{multline*}
  \qquad \Hfpqc (U^{+}; \dd C) \rTo \Hy \big( C^{+} \tensor_{\Rt[x]} \Rt
  \powers x \rTo 0 \lTo C \big ) \\ = \big( C^{+} \tensor_{\Rt[x]} \Rt
  \powers x \big) \oplus C \qquad
\end{multline*}
is a quasi-isomorphism. We have a similar quasi-isomorphism,
constructed in a similar manner, $\Hfpqc (U^{-}; \dd C) \rTo \big(
C^{-} \tensor_{\Rt[x\inv]} \Rt \powers {x\inv} \big) \oplus C$.

All these maps fit into the following commutative diagram:
\begin{diagram}[s=1cm]
  C^{-} & \rTo^{\mu^{-}} & C & \lTo^{\mu^{+}} & C^{+} \\
  \dTo>{\kappa^{-}}<{\simeq} && \dTo<{=} && \dTo>{\kappa^{+}}<{\simeq}
  \\
  \Hfpqc (U^{-}; \dd C) & \rTo^{\lambda^{-}} & C & \lTo^{\lambda^{+}}
  & \Hfpqc (U^{+}; \dd C) \\
  \dTo<{\simeq} && \dTo<{=} && \dTo<{\simeq} \\
  \big( C^{-} \tensor_{R[x \inv]} R \powers x\inv \big) \oplus C &
  \rTo^{\text{pr}_{C}} & C & \lTo^{\text{pr}_{C}} & \big( C^{+}
  \tensor_{R[x]} R \powers x \big) \oplus C
\end{diagram}
Application of~$\Hy$ to each row then results in a quasi-isomorphism
from $\Hy (\pp; \dd C)$ (the first row) with a chain complex that
contains, by construction, the complex $\Hy (C \rTo^{=} C \lTo^{=} C)$
as a direct summand. Now the latter is quasi-isomorphic to~$C$, by
Lemma~\ref{lem:H1-principle}, and the former is quasi-isomorphic to a
bounded complex of finitely generated free $R$\nbd-modules, as noted
before. It follows from Proposition~\ref{prop:fin_dom} that $C$~is
perfect as claimed.

\section{Extending sheaves from the torus to the projective line}
\label{sec:extending}

The proof given in the previous section relies on our ability to
extend the given chain complex~$C$ to a complex of vector bundles
on~$\pp$. There is a $K$\nbd-theoretical obstruction to extension, as
described in this section (which is motivated by the discussion of
extension problems by \textsc{Thomason} and \textsc{Trobaugh}
\cite[\S5.5]{MR1106918}). As a matter of terminology, we say that a
perfect complex~$C$ of $\Rt \laurent x$\nbd-modules {\it extends
  to~$\pp$ up to quasi-isomorphism\/} if there exists a strict perfect
complex $\dd V$ of sheaves on~$\pp$ such that $\dd V|_{T}$~is
quasi-isomorphic to~$C$.

\begin{lemma}
  \label{lem:extend_2_of_3}
  Let $0 \rTo C_{1} \rTo^{\alpha} C_{2} \rTo^{\beta} C_{3} \rTo 0$ be
  a short exact sequence of perfect complexes of $\Rt \laurent
  x$\nbd-modules. Suppose that two of the three complexes extend
  to~$\pp$ up to quasi-isomorphism. Then so does the third.
\end{lemma}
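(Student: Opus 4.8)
The plan is to exploit the fact that extension up to quasi-isomorphism is detected by the $K$-theoretical obstruction, and to use the compatibility of that obstruction with short exact sequences; but more elementarily, one can argue directly with mapping cones. First I would recall that a perfect complex $C$ of $\Rt\laurent x$-modules extends to $\pp$ up to quasi-isomorphism if and only if $C$ is quasi-isomorphic to a bounded complex of vector bundles whose differentials extend (in the sense of Lemma~\ref{lem:extend_map}). Given the short exact sequence $0 \rTo C_{1} \rTo C_{2} \rTo C_{3} \rTo 0$, the complex $C_{3}$ is quasi-isomorphic to the mapping cone $\cone(\alpha)$ of $\alpha\colon C_{1}\rTo C_{2}$, and similarly $C_{1}$ is quasi-isomorphic to a shift of the homotopy fibre of $\beta$, i.e. to $\cone(\beta)[-1]$. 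So the statement reduces to: if two of $C_{1}, C_{2}, \cone(\alpha)$ extend, so does the third.

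The key step is then to promote a chain map between perfect complexes on~$T$ to a map of strict perfect complexes on~$\pp$, so that one may form its cone there. Suppose $C_{1}$ and $C_{2}$ extend, witnessed by strict perfect complexes $\dd V_{1}$ and $\dd V_{2}$ of sheaves with $\dd V_{i}|_{T} \simeq C_{i}$. The composite $\dd V_{1}|_{T} \simeq C_{1} \rTo^{\alpha} C_{2} \simeq \dd V_{2}|_{T}$ is, up to quasi-isomorphism, a map of complexes of $\Rt\laurent x$-modules; applying Lemma~\ref{lem:extend_map} degreewise (the $V_{1,n}^{\pm}$ are finitely generated and the structure maps of $\dd V_{2}$ can be taken injective, e.g. after replacing the summands by twists of $\mathcal O(n)$ as in Proposition~\ref{prop:extend_trivial}) together with a downward induction on the chain degree to make the squares commute, we obtain a genuine map of strict perfect complexes of sheaves $\tilde\alpha \colon \dd V_{1} \rTo \dd V_{2}(N)$ for some $N\geq 0$, whose restriction to~$T$ is chain homotopic to~$\alpha$. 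Then $\cone(\tilde\alpha)$ is a strict perfect complex of sheaves on~$\pp$ whose restriction to~$T$ is quasi-isomorphic to $\cone(\alpha) \simeq C_{3}$; hence $C_{3}$ extends. The two remaining cases are symmetric: if $C_{2}$ and $C_{3}$ extend, realise the map $C_{1} \simeq \cone(\beta)[-1]$ analogously after lifting $\beta$ to a map of sheaves; if $C_{1}$ and $C_{3}$ extend, lift the connecting data so that $C_{2}$ appears as an iterated cone.

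The main obstacle is the lifting step: Lemma~\ref{lem:extend_map} only extends a \emph{single} morphism, and only after a twist, so one must check that the degreewise extensions can be made compatible with the differentials of $\dd V_{1}$ and $\dd V_{2}$. This is handled by the same downward induction used in the proof of Proposition~\ref{prop:extend_trivial}: having extended $\tilde\alpha$ in degrees $\geq n+1$, one applies Lemma~\ref{lem:extend_map} to $\dd Z = \dd V_{1,n}$ (a summand of a twist of $\mathcal O$) mapping via $\partial^{\dd V_{2}} \circ \tilde\alpha_{n+1}$, and absorbs the resulting twist by enlarging $N$ uniformly; since the complexes are bounded this terminates, and boundedness also bounds the total twist. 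One should also note that twisting a strict perfect complex of sheaves does not affect its restriction to~$T$, so none of these twists interfere with the identifications on the torus. The rest — that $\cone$ of a map of strict perfect complexes of sheaves is again strict perfect, and that restriction to~$T$ commutes with $\cone$ — is routine.
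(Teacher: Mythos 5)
Your proposal follows essentially the same route as the paper: reduce to showing that mapping cones of extendable complexes extend, represent the derived-category morphism between the restrictions by an actual chain map (possible since the source is strict perfect), promote it to a map of sheaf complexes via Lemma~\ref{lem:extend_map} after a sufficient twist, and form the cone on~$\pp$. One simplification worth noting: once the map is extended degreewise, compatibility with the differentials is automatic --- a morphism of sheaves with injective target structure maps (which vector bundles on~$\pp$ always have) that vanishes on~$T$ vanishes outright --- so the extra downward induction you introduce to enforce commutativity is not needed, and your description of that inductive step (applying Lemma~\ref{lem:extend_map} to $\partial^{\dd V_{2}}\circ\tilde\alpha_{n+1}$ with source $\dd V_{1,n}$) does not typecheck anyway.
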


\begin{proof}
  Suppose first that $C_{1}$ and~$C_{2}$ extend up to
  quasi-isomorphism. Let $\nu \colon C_{1} \rTo C_{2}$ be any chain
  map; we will show that then the mapping cone of~$\nu$ extends
  to~$\pp$ up to quasi-isomorphism. --- By hypothesis we find strict
  perfect complexes $\dd V_{1}$ and $\dd V_{2}$ on~$\pp$ with $\dd
  V_{1}|_{T} \simeq C_{1}$ and $\dd V_{2}|_{T} \simeq C_{2}$. Together
  with $\nu$ these quasi-isomorphisms determine a morphism $\dd
  V_{1}|_{T} \rTo \dd V_{2}|_{T}$ in the derived category of~$\Rt
  \laurent x$. Since $\dd V_{1}|_{T}$ is strict perfect, we can
  represent this morphism by an actual chain map~$\omega$. Note that
  the mapping cone of~$\omega$ is quasi-isomorphic to the mapping cone
  of~$\nu$.

  After replacing $\dd V_{2}$ by a twist $\dd V_{2}(k)$ for
  sufficiently large $k > 0$, we can extend~$\omega$ to a map~$\Omega$
  of complexes of sheaves on~$\pp$, which satisfies $\Omega|_{T} =
  \omega$; this follows from Lemma~\ref{lem:extend_map}, applied to
  $\dd Z=\dd V_{1}$, $\dd Y=\dd V_{2}$ and $f = \omega$ in each chain
  level (the constructed extensions are automatically compatible with
  the differentials). The mapping cone of~$\Omega$ is then a strict
  perfect complex of sheaves on~$\pp$ which restricts to the mapping
  cone of~$\omega$ on~$T$, thus extending the mapping cone of~$\nu$ up
  to quasi-isomorphism.

  \smallbreak

  The theorem now follows easily in view of the quasi-iso\-mor\-phisms
  $C_{3} \simeq \mathrm{Cone}(\alpha)$, $C_{1} \simeq \cone(\beta)[-1]$
  and $C_{2} \simeq \cone \big( \cone(\alpha) \rTo C_{1}[1] \big)
  [-1]$.
\end{proof}

Let $K_{0} (S)$ denote the \textsc{Grothendieck} group of isomorphism
classes of finitely generated projective $S$\nbd-modules. That is,
$K_{0}(S)$ is the quotient of the free \textsc{abel}ian group
generated by the isomorphism classes $[P]$, for $P$ a finitely
generated projective $S$\nbd-module, by the subgroup with one
generator $[P]+[R] - [Q]$ for every short exact sequence
\begin{equation}
  \label{eq:ses}
  0 \rTo P \rTo Q \rTo R \rTo 0
\end{equation}
of finitely generated projective $S$\nbd-modules. It follows from
general $K$\nbd-theory, specifically from 1.5.6, 1.11.2 and 1.11.7
of~\cite{MR1106918}, that we could have defined $K_{0}(A)$ as the
quotient of the free \textsc{abel}ian group generated by
quasi-isomorphism classes of strict perfect $S$\nbd-module complexes
by the subgroup with one generator $[P] + [R] - [Q]$ for each short
exact sequence~\eqref{eq:ses} of strict perfect complexes which splits
in each chain level.

By replacing ``$S$\nbd-module'' by ``sheaf on~$\pp$'', and ``finitely
generated projective $S$\nbd-module'' by ``vector bundle on~$\pp$'' we
obtain two equivalent definitions of $K_{0}(\pp)$.

It might be worthwhile to point out that in the case of
$S$\nbd-modules, it is automatic that the sequence~\eqref{eq:ses}
splits, and that in the case of strict perfect complexes
of $S$\nbd-modules the sequence~\eqref{eq:ses} splits in each chain
level. Also, two strict perfect complexes of $S$\nbd-modules are
quasi-isomorphic if and only if they are chain homotopy equivalent, so
we could have replaced ``quasi-isomorphism class'' by ``homotopy
class''. The situation is entirely different in the case of sheaves
on~$\pp$ where, in general, a sequence~\eqref{eq:ses} will be
non-split, and where quasi-isomorphism does not imply homotopy
equivalence.

\begin{theorem}
  \label{thm:vb_extends}
  Let $C$ be a strict perfect complex of $R \laurent
  x$\nbd-modules. The following three statements are equivalent:
  \begin{enumerate}[{\rm (1)}]
  \item The complex~$C$ extends to~$\pp$ up to quasi-isomorphism.
  \item The class $[C] \in K_{0} \big( \Rt \laurent x \big)$ lies in
    the image of the pullback map
    \[p^{*} \colon K_{0} (R) \rTo K_{0} \big( \Rt \laurent x \big) \ ,
    \quad [M] \mapsto \big[ M \tensor_{R} \Rt \laurent x \big]\] which
    is induced by the map $p \colon T \rTo P$.\goodbreak
  \item The class $[C] \in K_{0} \big( \Rt \laurent x \big)$ lies in
    the image of the restriction map
    \[i^{*} \colon K_{0} (\pp) \rTo K_{0} \big( \Rt \laurent x \big) \
    , \quad [C^{-} \rTo C \lTo C^{+}] \mapsto [C]\] which is induced
    by the inclusion map $i \colon T \subset \pp$.
  \end{enumerate}
\end{theorem}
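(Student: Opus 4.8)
The plan is to establish the cyclic chain of implications $(1) \Rightarrow (3) \Rightarrow (2) \Rightarrow (1)$. Two observations render parts of this routine. First, since $p = q \circ i$ for the structure map $q \colon \pp \rTo \spec R$, we have $p^{*} = i^{*} \circ q^{*}$, so $\mathrm{im}(p^{*}) \subseteq \mathrm{im}(i^{*})$ holds automatically; this gives $(2) \Rightarrow (3)$ outright and reduces $(3) \Rightarrow (2)$ to the opposite inclusion. Second, let $\mathcal{T}$ be the collection of strict perfect complexes of $\Rt \laurent x$\nbd-modules that extend to~$\pp$ up to quasi-isomorphism. By definition $\mathcal{T}$ is closed under quasi-isomorphism and contains the zero complex, and by Lemma~\ref{lem:extend_2_of_3} it satisfies two-out-of-three for short exact sequences of perfect complexes; it is therefore closed under shift (apply two-out-of-three to $0 \rTo X \rTo \cone(\id_{X}) \rTo X[1] \rTo 0$, where $\cone(\id_{X}) \simeq 0$) and under direct sums, and if both $X \oplus Y$ and~$X$ lie in~$\mathcal{T}$ then so does~$Y$. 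Finally, by Proposition~\ref{prop:extend_trivial} every bounded complex of finitely generated free $\Rt \laurent x$\nbd-modules lies in~$\mathcal{T}$, and for any strict perfect complex~$E$ of $R$\nbd-modules the pullback $p^{*}E = E \tensor_{R} \Rt \laurent x$ lies in~$\mathcal{T}$, being the restriction to~$T$ of the strict perfect complex of sheaves $q^{*}E = \big( E \tensor_{R} \Rt[x\inv] \rTo E \tensor_{R} \Rt \laurent x \lTo E \tensor_{R} \Rt[x] \big)$.

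For $(1) \Rightarrow (3)$ I would argue that restriction to~$T$ sends quasi-isomorphisms of complexes of sheaves to quasi-isomorphisms and level-wise split short exact sequences to level-wise split short exact sequences, and so induces the homomorphism $i^{*}$ of the theorem on either model of $K_{0}(\pp)$. If $C$ extends to~$\pp$ up to quasi-isomorphism, witnessed by a strict perfect complex~$\dd V$ of sheaves with $\dd V|_{T} \simeq C$, then $[C] = [\dd V|_{T}] = i^{*}[\dd V]$, so $[C] \in \mathrm{im}(i^{*})$.

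The step $(3) \Rightarrow (2)$ is where the real work lies: I must show $\mathrm{im}(i^{*}) \subseteq \mathrm{im}(p^{*})$. Since $i^{*}$ is additive and $K_{0}(\pp)$ is generated, in the vector bundle model, by the classes $[\dd V]$ of vector bundles $\dd V = (V^{-} \rTo V \lTo V^{+})$ on~$\pp$, it suffices to prove $i^{*}[\dd V] = [V] \in \mathrm{im}(p^{*})$ for each such~$\dd V$. Now $V^{+}$ is a finitely generated projective $\Rt[x]$\nbd-module with $V^{+} \tensor_{\Rt[x]} \Rt \laurent x \iso V$, so $[V]$ lies in the image of the base-change map $K_{0}(\Rt[x]) \rTo K_{0}(\Rt \laurent x)$; symmetrically, using~$V^{-}$, it lies in the image of $K_{0}(\Rt[x\inv]) \rTo K_{0}(\Rt \laurent x)$. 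By the fundamental theorem of algebraic $K$\nbd-theory (\textsc{Bass}--\textsc{Heller}--\textsc{Swan}), $K_{0}(\Rt \laurent x) \iso K_{0}(R) \oplus K_{-1}(R) \oplus NK_{0}(R) \oplus NK_{0}(R)$, the two copies of $NK_{0}(R)$ matching the two halves $\Rt[x]$ and~$\Rt[x\inv]$, and the images of the two base-change maps intersect in exactly the summand $K_{0}(R) = \mathrm{im}(p^{*})$; hence $[V] \in \mathrm{im}(p^{*})$. I expect this to be the main obstacle, since it is precisely the $K$\nbd-theoretic computation identifying the extension obstruction with the class of~$C$ in the quotient $K_{0}(\Rt \laurent x)/\mathrm{im}(p^{*}) \iso K_{-1}(R) \oplus NK_{0}(R) \oplus NK_{0}(R)$.

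For $(2) \Rightarrow (1)$ I would first note that every element of $\mathrm{im}(p^{*})$ has the form $[p^{*}E]$ for a strict perfect complex~$E$ of $R$\nbd-modules (if the element is $p^{*}[M_{0}] - p^{*}[M_{1}]$, take $E = M_{0} \oplus M_{1}[1]$), so we may write $[C] = [p^{*}E]$. Set $D = C \oplus p^{*}(E[1])$; then $[D] = [C] - [p^{*}E] = 0$ in $K_{0}(\Rt \laurent x)$. A strict perfect complex with vanishing class in $K_{0}$ is chain homotopy equivalent to a bounded complex of finitely generated free modules (elementary expansions make all its terms free except one, which is then stably free and can be freed up by one more expansion). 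Hence $D$ is homotopy equivalent to such a complex, so $D \in \mathcal{T}$ by Proposition~\ref{prop:extend_trivial}; and $p^{*}(E[1]) \in \mathcal{T}$ by the first paragraph. Applying two-out-of-three to the level-wise split short exact sequence $0 \rTo p^{*}(E[1]) \rTo D \rTo C \rTo 0$ gives $C \in \mathcal{T}$, i.e., $C$ extends to~$\pp$ up to quasi-isomorphism.
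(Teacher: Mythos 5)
Your proof is correct, but it takes a genuinely different route than the paper's, and the two organisations of the argument trade off in interesting ways.

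The cheap implications $(1)\Rightarrow(3)$ and $(2)\Rightarrow(3)$ (via $p^{*}=i^{*}\circ q^{*}$) agree with the paper. The real divergences are in the other two steps. For the containment $\mathrm{im}(i^{*})\subseteq\mathrm{im}(p^{*})$ you invoke the \textsc{Bass}--\textsc{Heller}--\textsc{Swan} decomposition of $K_{0}\big(\Rt\laurent x\big)$ and argue that a class which is simultaneously in the image of the two base-change maps $K_{0}(\Rt[x])\to K_{0}\big(\Rt\laurent x\big)$ and $K_{0}(\Rt[x\inv])\to K_{0}\big(\Rt\laurent x\big)$ must land in the $K_{0}(R)$-summand. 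This is true, but it uses more of the internal structure of the fundamental theorem (the naturality of the splittings and the precise identification of the images of the two base-change maps with $K_{0}(R)\oplus NK_{0}(R)$) than you actually spell out. The paper instead reads off $\mathrm{im}(i^{*})=\mathrm{im}(p^{*})$ in one line from the \textsc{Bass}--\textsc{Quillen} computation $K_{0}(\pp)\iso K_{0}(R)^{2}$, generated by $\mathcal{O}(0)$ and $\mathcal{O}(-1)$, both of which restrict to the free rank-one module on~$T$; this is more economical and avoids the intersection argument entirely.

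For $(2)\Rightarrow(1)$ your argument is sharper than the paper's: you reduce to the complex $D=C\oplus p^{*}(E[1])$ with $[D]=0\in K_{0}\big(\Rt\laurent x\big)$, invoke the classical elementary-expansion argument that a strict perfect complex of vanishing $K_{0}$-class is chain homotopy equivalent to a bounded complex of finitely generated free modules, and then apply Proposition~\ref{prop:extend_trivial} and the two-out-of-three Lemma~\ref{lem:extend_2_of_3}. The paper instead proves $(3)\Leftrightarrow(1)$ directly by building an auxiliary group $\pi=\Pi/\Lambda$ (free abelian group on quasi-isomorphism classes modulo direct-sum relations and classes that extend), showing that $[C]=0$ in $\pi$ exactly when $C$ extends, and then identifying $\pi$ with $\coker(i^{*})$. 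The paper's route is longer but self-contained: it does not appeal to the external fact about $K_{0}$-trivial complexes being homotopy equivalent to free ones, and it makes transparent why the cokernel of $i^{*}$ is precisely the obstruction group. Your version is slicker but outsources that classical ingredient. Both are sound.

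Two small points worth tightening in your write-up: (i) state explicitly that you are using that both $NK_{0}$-inclusions and $K_{-1}(R)$ inject compatibly into $K_{0}\big(\Rt\laurent x\big)$, so that the intersection computation in the direct sum really does reduce to $K_{0}(R)$; (ii) when you say ``$D$ is homotopy equivalent to a free complex, so $D\in\mathcal{T}$ by Proposition~\ref{prop:extend_trivial}'', record that $\mathcal{T}$ is closed under quasi-isomorphism (you note this earlier) so that replacing $D$ by its free model is legitimate.
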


\begin{proof}
  We show first that statements~(1) and~(3) are equivalent. The
  implication (1)~$\Rightarrow$~(3) is trivial.

  A bounded complex of finitely generated free $\Rt\laurent x$-modules
  extends to~$\pp$, as observed in
  Proposition~\ref{prop:extend_trivial}.  Next, for $C$ an arbitrary
  strict perfect complex we can find a bounded complex~$C'$ of
  finitely generated projective $R \laurent x$\nbd-modules such that
  $C \oplus C'$ consists of finitely generated free modules, and thus
  extends to~$\pp$.

  Now let $\Pi$ denote free \textsc{abel}ian group generated by the
  quasi-isomorphism classes $\langle A \rangle$ of strict perfect
  complexes~$A$ of $R \laurent x$\nbd-modules, and let $\Lambda$ be the
  subgroup generated by the relations
  \begin{subequations}
    \begin{gather}
      \langle A\rangle + \langle B\rangle - \langle A \oplus B \rangle
      \ , \label{rel:sum}\\
      \langle K \rangle \text{ for \(K\) extending to~\(\pp\) up to
        quasi-isomorphism.}
    \end{gather}
  \end{subequations}
  Define $\pi = \Pi/\Lambda$; we denote the image of~$\langle C
  \rangle$ in~$\pi$ by $[C]$. For $C$ and~$C'$ as in the previous
  paragraph the relations clearly imply $[C] + [C'] = 0$ so that
  $C'$~represents in inverse of~$[C]$.

  Now suppose that $C$ is such that $[C] = 0$ in $\pi = \Pi /
  \Lambda$. Then we must have $\langle C \rangle \in \Lambda$, that
  is, we find finitely many $A_{i}$, $B_{i}$, $\bar A_{j}$ and~$\bar B_{j}$,
  and finitely many $K_{k}$ and~$L_{\ell}$ that extend to~$\pp$ up to
  quasi-isomorphism, such that
  \begin{multline*}
    \langle C \rangle + \sum_{j} \big( \langle \bar A_{j} \rangle +
    \langle \bar B_{j} \rangle - \langle \bar A_{j} \oplus \bar B_{j} \rangle
    \big) + \sum_{\ell} \langle L_{\ell} \rangle \\
    = \sum_{i} \big( \langle A_{i} \rangle + \langle B_{i} \rangle -
    \langle A_{i} \oplus B_{i} \rangle \big) + \sum_{k} \langle K_{k}
    \rangle \ .
  \end{multline*}
  That is, we have the equality
  \begin{multline*}
    \langle C \rangle + \sum_{j} \big( \langle \bar A_{j} \rangle +
    \langle \bar B_{j} \rangle \big) + \sum_{i} \langle A_{i} \oplus B_{i}
    \rangle + \sum_{\ell} \langle L_{\ell} \rangle \\
    = \sum_{i} \big( \langle A_{i} \rangle + \langle B_{i} \rangle
    \big) + \sum_{j} \langle \bar A_{j} \oplus \bar B_{j} \rangle
    + \sum_{j} \langle K_{j} \rangle
  \end{multline*}
  in the free \textsc{abel}ian group~$\Pi$; this implies that the
  quasi-isomorphism classes occurring on both sides must agree,
  counted with multiplicities. That is, we have a quasi-isomorphism $C
  \oplus A \oplus L \simeq A \oplus K$ where we write $A =
  \bigoplus_{i} \big( A_{i} \oplus B_{i} \big) \oplus \bigoplus_{j}
  (\bar A_{j} \oplus \bar B_{j})$, $L = \bigoplus_{\ell} L_{\ell}$ and
  $K = \bigoplus_{k} K_{k}$. Let $A'$ represent an inverse for~$A$
  in~$\pi$ as in the first paragraph of this proof; then $A' \oplus A$
  extends to~$\pp$ up to quasi-isomorphism so that $A' \oplus A \oplus
  L$ and $C \oplus A' \oplus A \oplus L \simeq A' \oplus A \oplus K$
  do too. It follows from Lemma~\ref{lem:extend_2_of_3} that $C$
  extends to~$\pp$ up to quasi-isomorphism.

  \medskip

  We now know that $[C]=0 \in \pi$ if and only if $C$ extends to~$\pp$
  up to quasi-isomorphism. It remains to observe that $\pi =
  \coker(i^{*})$. This is almost immediate from the presentation of
  $K$-groups given at the beginning of this section; the one
  discrepancy we have is that $\pi$ incorporates the
  relation~\eqref{rel:sum} for split short exact sequences only,
  whereas in~$K_{0}(\Rt \laurent x)$ we have a relation for all
  levelwise split sequences. It is enough to verify the following
  assertion: {\it Given a short exact sequence $0 \rTo C \rTo^{f} D
    \rTo^{g} E \rTo 0$ of strict perfect complexes of $\Rt \laurent
    x$\nbd-modules, we have $[C] + [E] = [D]$ in the group~$\pi$.} For
  this, let $C'$ and~$E'$ represent inverses of~$[C]$ and~$[E]$
  in~$\pi$, as in the beginning of this proof. Then we have a short
  exact sequence
  \[0 \rTo C \oplus C' \rTo^{(f, \id_{C'}, 0)} D \oplus C' \oplus E'
  \rTo^{(g, \id_{E'})} E \oplus E' \rTo 0\] of strict perfect
  complexes, with first and third term being free; it follows that $D
  \oplus C' \oplus E'$ extends to~$\pp$ up to quasi-isomorphism
  (Lemma~\ref{lem:extend_2_of_3}) and thus represents $0 \in
  \pi$. Consequently, $[D] = -[C'] - [E'] = [C] + [E] \in \pi$.

  Finally, we need to prove that statements~(2) and~(3) are
  equivalent. As shown by \textsc{Bass} \cite[\S{}XII.9]{MR0249491}
  and \textsc{Quillen} \cite[\S8.3]{MR0338129} there is an isomorphism
  $K_{0}(R) \times K_{0} (R) \rTo^{\iso} K_{0} (\pp)$ which sends the
  element $\big([M], [N] \big)$ to~$[M \tensor_{R} \mathcal{O}(0)] +
  [M \tensor_{R} \mathcal{O} (-1)]$, which is mapped by~$i^{*}$ to
  $p^{*} \big([M] \big) + p^{*} \big([N] \big)$. It follows that the
  image of~$i^{*}$ coincides with the image of the map~$p^{*}$.
\end{proof}

The map~$p^{*}$ exhibits $K_{0}(R)$ as a direct summand of $K_{0}
\big( \Rt \laurent x \big)$ with complement $\coker(p^{*}) \iso K_{-1}
(R) \oplus NK_{0}(R) \oplus NK_{0} (R)$, according to the
\textsc{Bass}-\textsc{Heller}-\textsc{Swan} formula
\cite[Corollary~XII.7.6]{MR0249491}. Thus by
Theorem~\ref{thm:vb_extends}, {\it the group $K_{-1} (R) \oplus
  NK_{0}(R) \oplus NK_{0} (R)$ is trivial if and only if every strict
  perfect complex of $\Rt \laurent x$-modules extends to~$\pp$ up to
  quasi-isomorphism}. For the truncated polynomial ring $R =
\bZ[T]/T^{m}$, where $m \geq 2$ is not a prime power, $K_{-1}(R) \neq
\{0\}$ \cite[Theorem~XII.10.6~(b)]{MR0249491}; it follows that
$p^{*}$~is not surjective in this case so that there exist strict
perfect complexes on~$T$ which do not extend to~$\pp$ up to
quasi-isomorphism.

\medbreak

The previous paragraph implies that we cannot simply ignore the
freeness assumption in Theorem~A as the proof breaks down at the very
first step. It thus comes as a surprise that the trivial observation
made at the beginning of the paper, that every strict perfect complex
can be complemented so that the resulting complex consists of free
modules, can be used to overcome this issue. As there is no (obvious)
way to control the homological behaviour of the complement near~$0$
and~$\infty$, a carefully re-phrased argument, as documented
in~\cite{MR3001337}, is needed to generalise Theorem~A to strict
perfect $\Rt\laurent x$\nbd-module complexes.

\bigbreak
\raggedright
\frenchspacing

\providecommand{\bysame}{\leavevmode\hbox to3em{\hrulefill}\thinspace}
\providecommand{\href}[2]{#2}

\end{document}